\theoremstyle{plain}
\newtheorem{theorem}{Theorem}[section]
\newtheorem{corollary}[theorem]{Corollary}
\theoremstyle{remark}
\newtheorem*{remark}{Remark}
\newcommand{\CC}{{\mathbb C}}
\renewcommand{\Re}{\operatorname{Re}}
\begin{document}

\title{Contractivity of M\"obius functions of operators}

\date{}

\author[T. Ransford]{Thomas Ransford}
\address{D\'epartement de math\'ematiques et de statistique, Universit\'e Laval,
Qu\'ebec City (Qu\'ebec),  G1V 0A6, Canada}
\email[Corresponding author]{ransford@mat.ulaval.ca}

\author[D. Tsedenbayar]{Dashdondog Tsedenbayar}
\address{Department of  Mathematics, Mongolian University of Science and Technology,
P.O.\ Box 46/520, Ulaanbaatar, Mongolia}
\email{cdnbr@yahoo.com}

\begin{abstract}
Let $T$ be a  injective bounded linear operator on a complex Hilbert space.
We characterize the complex numbers $\lambda,\mu$
for which $(I+\lambda T)(I+\mu T)^{-1}$ is a contraction, 
the characterization being expressed in terms of the numerical range of the possibly unbounded operator $T^{-1}$.

When $T=V$, the Volterra operator on $L^2[0,1]$,
this leads  to a
result of Khadkhuu, Zem\'anek and the second author, characterizing those $\lambda,\mu$ for which 
$(I+\lambda V)(I+\mu V)^{-1}$ is a contraction.
Taking $T=V^n$,
we further deduce that  $(I+\lambda V^n)(I+\mu V^n)^{-1}$
is never a contraction if $n\ge2$ and $\lambda\ne\mu$.
\end{abstract}

\keywords{Hilbert space, contraction, numerical range, Volterra operator}

\makeatletter
\@namedef{subjclassname@2020}{\textup{2020} Mathematics Subject Classification}
\makeatother

\subjclass[2020]{Primary 47G10; Secondary 47A12}

\maketitle

%%%%%%%%%%%%%%%%%%%%%%%%%%%%%%%%%%%%%%%%%%%%%%%%

\section{Introduction and statement of results}\label{S:intro}

Let $H$ be a complex Hilbert space,
and let $T$ be a bounded linear operator on $H$.
It is well known that  $\|e^{-tT}\|\le1$ for all $t>0$
if and only if the numerical range of $T$ 
 is contained in the right half-plane $\{z\in\CC:\Re z\ge0\}$.
This is a version of the celebrated Lumer--Phillips theorem \cite{LP61}.

In this note, we study  of operators of the form
$(I+\lambda T)(I+\mu T)^{-1}$, where $\lambda,\mu\in\CC$.
Our goal is to obtain a necessary and sufficient condition for when
$\|(I+\lambda T)(I+\mu T)^{-1}\|\le1$. In contrast with the 
Lumer--Phillips theorem, our condition is expressed
terms of the numerical range of $T^{-1}$ rather than that of $T$.
Here we assume that $T$ is injective, so $T^{-1}$ well-defined on $T(H)$,
though it may well be an unbounded operator.
We should therefore make precise the notion of numerical range
of an unbounded operator.

If $A$ is a (possibly unbounded) operator defined on a domain $D(A)\subset H$,
then the numerical range of $A$ is defined by
\[
W(A):=\{\langle Ax,x\rangle: x\in D(A), \|x\|=1\}.
\]
It is always a convex subset of $\CC$. Furthermore, if $D(A)=H$ and $A$ is a bounded operator,
then the spectrum $\sigma(A)$ of $A$ is contained in the closure of $W(A)$. For  details, see 
\cite[pp.264--265]{Da07}.

To state our results, we need to recall one more definition.
If $\Omega$ is a non-empty convex subset of $\CC$, then
its \emph{support function} $h_\Omega:\CC\to(-\infty,\infty]$
is defined by
\[
h_\Omega(z):=\sup\{\Re(z\overline{w}):w\in\Omega\}.
\]

\begin{theorem}\label{T:abstract}
Let $H$ be a complex Hilbert space, let $T$ be an injective bounded  operator on $H$,
and let $\lambda,\mu\in\CC$. Suppose  that $(I+\mu T)$ is invertible.
Then 
\[
\|(I+\lambda T)(I+\mu T)^{-1}\|\le1
\quad\iff\quad
2h_{W(T^{-1})}(\lambda-\mu)\le|\mu|^2-|\lambda|^2.
\]
\end{theorem}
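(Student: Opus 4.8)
The plan is to strip off the operator norm, reduce to a scalar inequality holding for all unit vectors, and then recognize that inequality as a statement about the numerical range of $T^{-1}$.

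First I would use the invertibility of $(I+\mu T)$ to rewrite the condition. Writing an arbitrary vector of $H$ in the form $(I+\mu T)x$, the inequality $\|(I+\lambda T)(I+\mu T)^{-1}\|\le 1$ is equivalent to $\|(I+\lambda T)x\|\le\|(I+\mu T)x\|$ for all $x\in H$. Squaring, expanding, and cancelling the common $\|x\|^2$ terms (using $\langle Tx,x\rangle+\langle x,Tx\rangle=2\Re\langle Tx,x\rangle$), this becomes
\[
(|\lambda|^2-|\mu|^2)\|Tx\|^2+2\Re\big((\lambda-\mu)\langle Tx,x\rangle\big)\le0 \qquad(x\in H).
\]

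Next I would change variables to introduce $T^{-1}$. Since $T$ is injective, as $x$ ranges over $H$ the vector $y=Tx$ ranges bijectively over $T(H)=D(T^{-1})$, with $x=T^{-1}y$. Then $\|Tx\|^2=\|y\|^2$ and $\langle Tx,x\rangle=\langle y,T^{-1}y\rangle=\overline{\langle T^{-1}y,y\rangle}$, so the displayed inequality is equivalent to
\[
(|\lambda|^2-|\mu|^2)\|y\|^2+2\Re\big((\lambda-\mu)\overline{\langle T^{-1}y,y\rangle}\big)\le0 \qquad(y\in D(T^{-1})).
\]
The case $y=0$ is trivial; for $y\ne0$ I would divide by $\|y\|^2$ and set $u=y/\|y\|$, so that $w:=\langle T^{-1}u,u\rangle$ runs over exactly $W(T^{-1})$ as $u$ runs over the unit vectors of $D(T^{-1})$. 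The condition then reads $2\Re((\lambda-\mu)\bar w)\le|\mu|^2-|\lambda|^2$ for all $w\in W(T^{-1})$, and taking the supremum over $w$ together with the definition $h_{W(T^{-1})}(\lambda-\mu)=\sup\{\Re((\lambda-\mu)\bar w):w\in W(T^{-1})\}$ yields the claimed equivalence.

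I do not expect a serious obstacle here: the argument is two changes of variable separated by an elementary expansion, and it is reversible at each stage, which is what makes it an equivalence rather than a one-way implication. The step most prone to error is the conjugation identity $\langle Tx,x\rangle=\overline{\langle T^{-1}y,y\rangle}$, which has to be tracked carefully so that the resulting expression matches the precise convention $h_\Omega(z)=\sup_w\Re(z\overline w)$ for the support function; a stray conjugate there would produce $h_{W(T^{-1})}(\mu-\lambda)$ instead. I would also note in passing that $W(T^{-1})$ is non-empty, since $T(H)\ne\{0\}$, so the support function is genuinely being evaluated on a non-trivial set.
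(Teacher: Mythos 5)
Your proof is correct and follows essentially the same route as the paper's: the same chain of reversible equivalences (reduce to $\|(I+\lambda T)x\|\le\|(I+\mu T)x\|$, expand and cancel, substitute $y=Tx$ using injectivity, normalize to unit vectors, and read off the support function). Your extra care with the conjugation $\langle Tx,x\rangle=\overline{\langle T^{-1}y,y\rangle}$ is exactly the point where the paper's step from $\langle y,T^{-1}y\rangle$ to $\Re\bigl((\lambda-\mu)\overline{w}\bigr)$ is implicitly using the same identity, so the two arguments match line for line.
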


We single out two special cases of this result that are worthy of note.
In the first of these, we write 
$[-\overline{\mu},\mu)$ for $\{-(1-t)\overline{\mu}+t\mu:t\in[0,1)\}$.

\begin{theorem}\label{T:halfplane}
Let $H$ be a complex Hilbert space, and let $T$ be an injective bounded operator on $H$
such that
$W(T^{-1})=\{z\in\CC:\Re z\ge0\}$.
Then, given distinct $\lambda,\mu\in\CC$,
the following statements are equivalent:
\begin{enumerate}[\normalfont(i)]
\item $(I+\mu T)$ is invertible and $\|(I+\lambda T)(I+\mu T)^{-1}\|=1$;
\item $\Re\mu>0$ and $\lambda\in[-\overline{\mu},\mu)$.
\end{enumerate}
\end{theorem}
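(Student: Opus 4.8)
The plan is to derive everything from Theorem~\ref{T:abstract} by making its inequality
$2h_{W(T^{-1})}(\lambda-\mu)\le|\mu|^2-|\lambda|^2$ completely explicit for $\Omega:=W(T^{-1})=\{z:\Re z\ge0\}$, and then to close the two gaps between that inequality and statement~(i): the invertibility of $I+\mu T$, and the upgrade of ``$\le1$'' to ``$=1$''. Throughout write $S:=(I+\lambda T)(I+\mu T)^{-1}$.

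First I would compute the support function of the closed right half-plane. Writing $z=a+bi$ and maximizing $\Re(z\overline{w})=au+bv$ over $w=u+iv$ with $u\ge0$, one finds $h_\Omega(z)=0$ when $z$ is real and $\le0$, and $h_\Omega(z)=+\infty$ otherwise. Since $|\mu|^2-|\lambda|^2$ is finite, Theorem~\ref{T:abstract} can hold only when $\lambda-\mu$ is real and $\le0$; writing $\lambda-\mu=-s$ with $s>0$ (strict because $\lambda\ne\mu$), the inequality reduces to $|\lambda|\le|\mu|$. A short computation translates this into the geometry of~(ii): the relation $\lambda-\mu=-s$ forces $\Im\lambda=\Im\mu$ and $\Re\lambda<\Re\mu$, while $|\lambda|\le|\mu|$ becomes $s\le2\Re\mu$, which forces $\Re\mu>0$ and puts $\Re\lambda$ in $[-\Re\mu,\Re\mu)$; this is exactly $\Re\mu>0$ together with $\lambda\in[-\overline{\mu},\mu)$. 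Hence, whenever $I+\mu T$ is invertible, $\|S\|\le1$ is equivalent to~(ii).

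Next I would settle invertibility, needed for (ii)$\Rightarrow$(i). From $W(T^{-1})=\{\Re\ge0\}$ one deduces $W(T)\subseteq\{\Re\ge0\}$: for $y\ne0$ set $x=Ty$, so that $\langle Ty,y\rangle=\overline{\langle T^{-1}x,x\rangle}$ has non-negative real part. Consequently $\overline{W(I+\mu T)}\subseteq 1+\mu\{\Re\ge0\}$, and when $\Re\mu>0$ a direct check (using $\Re(1/\mu)>0$) shows $0$ lies outside this closed set; since $\sigma(I+\mu T)\subseteq\overline{W(I+\mu T)}$, the operator $I+\mu T$ is invertible. Thus (ii) already supplies both the invertibility and the bound $\|S\|\le1$.

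The remaining, and to my mind principal, step is to rule out a strict contraction, i.e.\ to show $\|S\|\ge1$ under (ii), so that the norm equals $1$. The cleanest route is spectral: because $W(T^{-1})$ is unbounded, $T^{-1}$ is unbounded, so $T$ is not invertible and $0\in\sigma(T)$. Applying the holomorphic functional calculus to $g(t)=(\lambda-\mu)t(1+\mu t)^{-1}$ (holomorphic near $\sigma(T)$ since $-1/\mu\notin\sigma(T)$) gives $g(T)=S-I$, and the spectral mapping theorem yields $0=g(0)\in\sigma(S-I)$, i.e.\ $1\in\sigma(S)$; hence $\|S\|\ge r(S)\ge1$, where $r$ denotes spectral radius. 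Alternatively one argues directly: since $W(T^{-1})$ contains the whole imaginary axis, choose unit vectors $x_n$ with $\langle T^{-1}x_n,x_n\rangle=in$; then, with $y_n=T^{-1}x_n$, the defect $\|(I+\mu T)y_n\|^2-\|(I+\lambda T)y_n\|^2$ stays bounded while $\|(I+\mu T)y_n\|^2\ge|in+\mu|^2$ grows like $n^2$, forcing the ratio of norms to $1$. Either way, combining with $\|S\|\le1$ gives $\|S\|=1$, completing (ii)$\Rightarrow$(i); the reverse implication (i)$\Rightarrow$(ii) follows from the equivalence of the first two paragraphs, using only that (i) entails $\|S\|\le1$ with $I+\mu T$ invertible. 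I expect this norm-attainment step to be the main obstacle, since it is the one place where the specific unbounded geometry of $W(T^{-1})$ — rather than merely the inequality of Theorem~\ref{T:abstract} — is essential.
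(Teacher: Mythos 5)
Your proposal is correct and follows essentially the same route as the paper: it computes the support function of the closed right half-plane, applies Theorem~\ref{T:abstract} to reduce $\|S\|\le1$ to the condition $\lambda-\mu\in(-\infty,0)$ and $|\lambda|\le|\mu|$ (which you correctly translate into $\Re\mu>0$ and $\lambda\in[-\overline{\mu},\mu)$), derives invertibility of $I+\mu T$ from $W(T)\subseteq\{\Re z\ge0\}$ when $\Re\mu>0$, and obtains $\|S\|\ge1$ from $0\in\sigma(T)$ via $1\in\sigma(S)$. Your spectral-mapping justification of $1\in\sigma(S)$ (and the alternative direct argument with the vectors $x_n$) simply makes explicit a step the paper asserts without detail, so the two proofs coincide in substance.
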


\begin{theorem}\label{T:fullplane}
Let $H$ be a complex Hilbert space, and let $T$ be an injective bounded operator on $H$
such that $W(T^{-1})=\CC$.
If $\lambda,\mu\in\CC$ with $\lambda\ne\mu$ and if $(I+\mu T)$ is invertible, then 
$\|(I+\lambda T)(I+\mu T)^{-1}\|>1$.
\end{theorem}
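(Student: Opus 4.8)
The plan is to deduce this immediately from Theorem~\ref{T:abstract} by computing the support function of the whole plane. First I would recall that for a non-empty convex set $\Omega\subset\CC$ the support function is $h_\Omega(z)=\sup\{\Re(z\overline{w}):w\in\Omega\}$, and then evaluate this when $\Omega=\CC$. For any $z\ne0$ I would take $w=tz$ with $t>0$, so that $z\overline{w}=t|z|^2$, whence $\Re(z\overline{w})=t|z|^2\to\infty$ as $t\to\infty$. This shows $h_{\CC}(z)=+\infty$ for every $z\ne0$ (while $h_{\CC}(0)=0$).

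With this in hand the argument is essentially a one-line contradiction with the criterion of Theorem~\ref{T:abstract}. Since $\lambda\ne\mu$, the vector $\lambda-\mu$ is nonzero, so $h_{W(T^{-1})}(\lambda-\mu)=h_{\CC}(\lambda-\mu)=+\infty$. As the quantity $|\mu|^2-|\lambda|^2$ on the right-hand side is a finite real number, the inequality $2h_{W(T^{-1})}(\lambda-\mu)\le|\mu|^2-|\lambda|^2$ cannot possibly hold. The hypothesis that $(I+\mu T)$ is invertible guarantees that the operator $(I+\lambda T)(I+\mu T)^{-1}$ is well defined, so the equivalence in Theorem~\ref{T:abstract} applies and its failure forces $\|(I+\lambda T)(I+\mu T)^{-1}\|>1$, as claimed.

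I do not anticipate any real obstacle here: the result is a direct corollary of Theorem~\ref{T:abstract}, with the support-function computation being the only substantive step. The single point meriting a moment's care is the degenerate case $z=0$, where $h_{\CC}$ takes the finite value $0$ rather than $+\infty$; this is exactly the case excluded by the assumption $\lambda\ne\mu$, which is why that hypothesis is indispensable to the statement.
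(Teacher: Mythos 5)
Your proposal is correct and is essentially identical to the paper's own proof: both evaluate $h_{\CC}(z)=\infty$ for $z\ne0$ and conclude via the equivalence in Theorem~\ref{T:abstract} that the contractivity inequality fails when $\lambda\ne\mu$. The only difference is that you spell out the (elementary) support-function computation and the role of the hypothesis $\lambda\ne\mu$, which the paper leaves implicit.
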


Theorems~\ref{T:abstract}, \ref{T:halfplane} and \ref{T:fullplane} will all be proved in \S\ref{S:abstract}.

We now illustrate them using the  Volterra operator
$V:L^2[0,1]\to L^2[0,1]$,
defined by
\[
Vf(x):=\int_0^ xf(t)\,dt \quad(f\in L^2[0,1]).
\]
For background information on $V$,
see e.g.\ \cite[Chapter~7]{GMR23}.
Note, in particular, that $V$ is an injective bounded  operator
whose spectrum is equal to $\{0\}$,
so the abstract results above do indeed apply to $T=V$, and indeed to $T=V^n$
for all $n\ge1$. 

We first need to
determine the numerical range of $V^{-n}$. This is the subject of
the following theorems.

\begin{theorem}\label{T:inverse}
$W(V^{-1})=\{z\in\CC:\Re z\ge0\}$.
\end{theorem}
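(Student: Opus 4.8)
The plan is to first identify the unbounded operator $V^{-1}$ explicitly. Since $Vf(x)=\int_0^x f(t)\,dt$, a function $g$ lies in the range $V(L^2[0,1])$ if and only if $g$ is absolutely continuous with $g'\in L^2[0,1]$ and $g(0)=0$, and in that case $V^{-1}g=g'$. Thus
\[
D(V^{-1})=\{g\in L^2[0,1]: g \text{ absolutely continuous}, \ g'\in L^2[0,1], \ g(0)=0\},
\]
and the numerical range is $W(V^{-1})=\{\int_0^1 g'\overline{g}\,dx : g\in D(V^{-1}),\ \|g\|=1\}$.

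For the inclusion $W(V^{-1})\subseteq\{z\in\CC:\Re z\ge0\}$ I would take any $g\in D(V^{-1})$ with $\|g\|=1$ and integrate by parts. Adding $\langle V^{-1}g,g\rangle$ to its conjugate gives
\[
2\Re\langle V^{-1}g,g\rangle=\int_0^1\bigl(g'\overline{g}+\overline{g}'g\bigr)\,dx=\int_0^1\bigl(|g|^2\bigr)'\,dx=|g(1)|^2-|g(0)|^2=|g(1)|^2,
\]
where the last equality uses $g(0)=0$. Hence $\Re\langle V^{-1}g,g\rangle=\tfrac12|g(1)|^2\ge0$, which settles one inclusion cleanly and for all $g$ in the domain.

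For the reverse inclusion I would exhibit test functions covering the entire closed right half-plane. Writing $g=\phi e^{i\psi}$ with real-valued $\phi,\psi$ and $\phi(0)=0$, a short computation (using $g'\overline{g}=\phi\phi'+i\phi^2\psi'$) yields
\[
\Re\langle V^{-1}g,g\rangle=\tfrac12\phi(1)^2,\qquad \Im\langle V^{-1}g,g\rangle=\int_0^1\phi^2\psi'\,dx.
\]
Given $\alpha\ge0$ and $\beta\in\RR$, I would choose $\phi$ with $\phi(0)=0$, $\int_0^1\phi^2\,dx=1$ and $\phi(1)=\sqrt{2\alpha}$, and then take $\psi$ (for instance linear, $\psi(x)=\gamma x$) so that $\int_0^1\phi^2\psi'\,dx=\beta$; the resulting unit vector satisfies $\langle V^{-1}g,g\rangle=\alpha+i\beta$. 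As $\alpha\ge0$ and $\beta$ are arbitrary, this gives $\{z\in\CC:\Re z\ge0\}\subseteq W(V^{-1})$. (Alternatively, one could invoke convexity of $W(V^{-1})$: the real functions $g=\sqrt{2n+1}\,x^n$ give the values $n+\tfrac12\to\infty$ on the positive real axis, while $g=\sqrt2\,\sin(\pi x)e^{i\gamma x}$ gives every point $i\gamma$ on the imaginary axis, and the convex hull of these is already the whole half-plane.)

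I expect the reverse inclusion to be the main obstacle, specifically arranging the normalization $\|g\|=1$ to be compatible with an arbitrarily large real part: one needs $\phi(1)=\sqrt{2\alpha}$ large while keeping $\int_0^1\phi^2=1$ and $\phi'\in L^2[0,1]$. This is handled by a concentration argument, letting $\phi$ rise steeply to $\sqrt{2\alpha}$ only on a short interval near $x=1$, which keeps both the $L^2$ norm of $\phi$ and of $\phi'$ finite. The forward inclusion, by contrast, is routine once $V^{-1}$ has been identified as differentiation with the boundary condition $g(0)=0$.
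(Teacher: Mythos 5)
Your proof is correct, and while the easy inclusion matches the paper, your argument for the hard inclusion is genuinely different. For $W(V^{-1})\subseteq\{\Re z\ge0\}$, you and the paper do essentially the same thing: the paper writes $g=Vf$ and uses $\Re\langle f,Vf\rangle=\tfrac12\langle f,(V+V^*)f\rangle=\tfrac12\bigl|\int_0^1 f\bigr|^2$, which is exactly your identity $2\Re\langle V^{-1}g,g\rangle=|g(1)|^2$ obtained by integrating $(|g|^2)'$. For the reverse inclusion the routes diverge: the paper evaluates the numerical range at two explicit families, $g_n(x)=e^{\pm2\pi inx}-1$ (giving the points $\pm i\pi n$) and $h_n(x)=x^n$ (giving $n+\tfrac12$), and then invokes convexity of the numerical range to fill out the half-plane; you instead write $g=\phi e^{i\psi}$ in polar form, observe that $\Re\langle V^{-1}g,g\rangle=\tfrac12\phi(1)^2$ and $\Im\langle V^{-1}g,g\rangle=\int_0^1\phi^2\psi'$, and construct for each target $\alpha+i\beta$ with $\alpha\ge0$ a unit vector realizing that value exactly, needing no convexity at all. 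Your approach is constructive and shows every point of the closed half-plane is literally attained; the paper's is shorter because the computations for its explicit functions are immediate and convexity does the filling (your parenthetical alternative, with $\sqrt{2}\sin(\pi x)e^{i\gamma x}$ in place of the exponentials, is essentially the paper's argument). One small point: you assert rather than exhibit the profile $\phi$ with $\phi(0)=0$, $\int_0^1\phi^2=1$, $\phi(1)=\sqrt{2\alpha}$ and $\phi'\in L^2$, but your concentration remark is exactly right and a piecewise-linear $\phi$ (flat at height $c\approx1$, ramping up to $\sqrt{2\alpha}$ on a short interval near $x=1$, with $c$ adjusted by continuity so the norm is exactly $1$) settles it, so this is a presentational gap, not a mathematical one.
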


\begin{theorem}\label{T:higherpowers}
If $n\ge2$, then $W(V^{-n})=\CC$.
\end{theorem}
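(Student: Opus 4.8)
The plan is to realize $V^{-n}$ explicitly as a differential operator and then show that its numerical range, which is automatically convex, contains points of arbitrarily large modulus in three directions roughly $120^\circ$ apart; the convex hull of such points is already all of $\CC$. Since $V^n$ is $n$-fold integration, its range consists of the functions $g\in L^2[0,1]$ that are $n$ times differentiable with $g^{(n)}\in L^2[0,1]$ and $g(0)=g'(0)=\cdots=g^{(n-1)}(0)=0$, and on this domain $V^{-n}g=g^{(n)}$. Note that the domain imposes conditions only at the left endpoint. Thus
\[
W(V^{-n})=\Bigl\{\int_0^1 g^{(n)}(x)\overline{g(x)}\,dx: \|g\|=1,\ g^{(j)}(0)=0\ (0\le j<n),\ g^{(n)}\in L^2\Bigr\}.
\]

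First I would record a scaling property: if $w\in W(V^{-n})$ is attained by a unit vector $g$, then for each $\epsilon\in(0,1]$ the function obtained by compressing $g$ into $[1-\epsilon,1]$ and extending by $0$ (legitimate, since $g$ and its first $n-1$ derivatives vanish at the left endpoint) lies in the domain and, after renormalizing, yields the value $\epsilon^{-n}w$. Hence $w\in W(V^{-n})\Rightarrow rw\in W(V^{-n})$ for all $r\ge1$. Consequently, once I know that $W(V^{-n})$ contains a disc $B(0,\delta)$, taking the union of its dilates gives $\bigcup_{r\ge1}rB(0,\delta)=\CC$, so $W(V^{-n})=\CC$. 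It therefore suffices to produce three points of $W(V^{-n})$ whose convex hull surrounds the origin.

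To find points with prescribed arguments, I would test against $\psi_\zeta(x):=x^n e^{\zeta x}$ for $\zeta\in\CC$; the factor $x^n$ guarantees $\psi_\zeta^{(j)}(0)=0$ for $j<n$, so $\psi_\zeta$ lies in the domain. Writing $\rho=\Re\zeta$ and expanding $\psi_\zeta^{(n)}$ by the Leibniz rule, one finds
\[
\frac{\langle\psi_\zeta^{(n)},\psi_\zeta\rangle}{\|\psi_\zeta\|^2}
=\zeta^n+\sum_{k=1}^n\binom nk\frac{I_k(\rho)}{I_0(\rho)}\,\zeta^{n-k},
\qquad
I_k(\rho):=\int_0^1 (x^n)^{(k)}x^n e^{2\rho x}\,dx,
\]
where each $I_k(\rho)$ is real and positive. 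The crux is the asymptotic evaluation $I_k(\rho)/I_0(\rho)\to n!/(n-k)!$ as $\rho\to+\infty$ (a Laplace-type estimate, since the mass of $I_k(\rho)$ concentrates at $x=1$), which shows the lower-order coefficients stay bounded. Letting $|\zeta|\to\infty$ along a ray $\arg\zeta=\alpha$ with $\alpha\in(-\pi/2,\pi/2)$ forces $\rho\to+\infty$, so $\langle\psi_\zeta^{(n)},\psi_\zeta\rangle/\|\psi_\zeta\|^2\sim\zeta^n$ and the argument of this point of $W(V^{-n})$ tends to $n\alpha$.

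Finally, because $n\ge2$ the map $\alpha\mapsto n\alpha$ carries $(-\pi/2,\pi/2)$ onto an interval of length $n\pi\ge2\pi$, so I can choose three rays producing points of $W(V^{-n})$ with arguments as close as desired to $0,\ 2\pi/3,\ 4\pi/3$ and of modulus as large as desired. These surround the origin, so their convex hull contains a disc about $0$; by convexity this disc lies in $W(V^{-n})$, and the scaling property then gives $W(V^{-n})=\CC$. The main obstacle is the Laplace-type estimate controlling $I_k(\rho)/I_0(\rho)$ uniformly; everything else is bookkeeping. (Alternatively, one can bypass the scaling property: the convex hull of points of arbitrarily large modulus lying in three directions $120^\circ$ apart is already all of $\CC$.)
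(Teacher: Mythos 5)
Your proof is correct and follows essentially the same route as the paper: the same test functions $x^n e^{\zeta x}$ with $\Re\zeta\to+\infty$, the same Leibniz expansion, the same Laplace-type concentration estimate at $x=1$, and the same conclusion that for $n\ge2$ the attainable arguments $n\alpha$, $\alpha\in(-\pi/2,\pi/2)$, sweep out enough directions for convexity to force $W(V^{-n})=\CC$. Your additional dilation-invariance observation is a nice touch but, as you note yourself, not needed once one has unbounded points in three directions surrounding the origin.
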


These results will be proved in \S\ref{S:negpowers}.
Combining Theorems~\ref{T:halfplane} and \ref{T:inverse},
we immediately recover the following result,
originally due to Khadkhuu, Zem\'anek and the second author \cite[Theorem~2.1]{KTZ15}.

\begin{corollary}
Let $\lambda,\mu\in\CC$ with $\lambda\ne\mu$. Then $\|(I+\lambda V)(I+\mu V)^{-1}\|=1$
if and only if $\Re \mu>0$ and  $\lambda\in[-\overline{\mu},\mu)$.
\end{corollary}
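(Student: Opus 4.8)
The plan is to deduce this directly from Theorem~\ref{T:halfplane} applied to $T=V$, with the numerical-range input supplied by Theorem~\ref{T:inverse}. First I would record the two facts that make $V$ an eligible operator: it is injective and bounded on $L^2[0,1]$, and (as noted in the text) its spectrum equals $\{0\}$, so $V$ is quasinilpotent. The quasinilpotence is precisely what removes the only real hypothesis-checking in the argument, since it forces $\sigma(\mu V)=\{0\}$ for every $\mu\in\CC$, whence $-1\notin\sigma(\mu V)$ and $I+\mu V$ is invertible for all $\mu$. In particular the operator $(I+\lambda V)(I+\mu V)^{-1}$ is well-defined for all $\lambda,\mu$, so the expression in the corollary always makes sense, and the invertibility clause appearing in part~(i) of Theorem~\ref{T:halfplane} is automatically satisfied.

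With that in hand, the second step is to verify the structural hypothesis of Theorem~\ref{T:halfplane}, namely that $W(T^{-1})=\{z\in\CC:\Re z\ge0\}$ for $T=V$. This is exactly the content of Theorem~\ref{T:inverse}: since $V$ is injective, $V^{-1}$ is defined on the range $V(L^2[0,1])$, and that theorem identifies its numerical range with the closed right half-plane. No further computation is needed at this stage, as the spectral analysis of $V^{-1}$ has been carried out separately.

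Finally, I would simply invoke Theorem~\ref{T:halfplane} with $T=V$. For distinct $\lambda,\mu$ it asserts the equivalence of statement (i), ``$I+\mu V$ is invertible and $\|(I+\lambda V)(I+\mu V)^{-1}\|=1$'', with statement (ii), ``$\Re\mu>0$ and $\lambda\in[-\overline{\mu},\mu)$''. By the first step the invertibility in (i) is free, so (i) reduces to the bare norm equality $\|(I+\lambda V)(I+\mu V)^{-1}\|=1$, which is precisely the assertion of the corollary. There is essentially no obstacle here, since the whole content has been front-loaded into Theorems~\ref{T:halfplane} and~\ref{T:inverse}; the only point requiring a moment's care is matching the invertibility clause of Theorem~\ref{T:halfplane}(i) against the corollary's cleaner formulation, and this is reconciled by the quasinilpotence of $V$ observed at the outset.
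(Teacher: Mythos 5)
Your proposal is correct and follows exactly the paper's route: the corollary is obtained by combining Theorem~\ref{T:halfplane} (with $T=V$) and Theorem~\ref{T:inverse}, and your observation that quasinilpotence of $V$ makes $I+\mu V$ invertible for every $\mu$ is precisely the point that lets the invertibility clause in Theorem~\ref{T:halfplane}(i) be dropped. The paper leaves this reconciliation implicit (noting only in the introduction that $\sigma(V)=\{0\}$), so your write-up is, if anything, slightly more complete.
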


Also, Theorems~\ref{T:fullplane} and \ref{T:higherpowers}  together immediately
lead to the following result.

\begin{corollary}\label{C:higherpowers}
If $n\ge2$, then
 $\|(I+\lambda V^n)(I+\mu V^n)^{-1}\|>1$ for all $\lambda,\mu\in\CC$ with $\lambda\ne\mu$.
\end{corollary}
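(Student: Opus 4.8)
The plan is to obtain Corollary~\ref{C:higherpowers} as an immediate consequence of the two results it cites, so the work really amounts to checking that their hypotheses are met and then invoking them verbatim. First I would recall that $V$ is an injective bounded operator on $L^2[0,1]$ whose spectrum equals $\{0\}$, as noted in the introduction. Since $V$ is injective, so is $V^n$ for every $n\ge1$; hence $T:=V^n$ is an injective bounded operator and $T^{-1}=V^{-n}$ is well-defined on its range. This is precisely the standing hypothesis required by Theorem~\ref{T:fullplane}.

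Next I would supply the numerical-range input. By Theorem~\ref{T:higherpowers}, for every $n\ge2$ we have $W(V^{-n})=\CC$, which is exactly the hypothesis $W(T^{-1})=\CC$ of Theorem~\ref{T:fullplane} with $T=V^n$. The only remaining point to verify is the invertibility clause: Theorem~\ref{T:fullplane} is stated under the assumption that $(I+\mu T)$ is invertible. Here $\mu\in\CC$ is arbitrary, but because $\sigma(V^n)=\{0\}$ the operator $\mu V^n$ is quasinilpotent, so $\sigma(I+\mu V^n)=\{1\}$ and in particular $1\notin\sigma(\mu V^n)$ scaled appropriately; concretely $I+\mu V^n$ is always invertible for every $\mu\in\CC$. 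Thus the invertibility hypothesis holds automatically and imposes no restriction on $\mu$.

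With these verifications in hand, I would simply apply Theorem~\ref{T:fullplane} with $T=V^n$: for any distinct $\lambda,\mu\in\CC$, the hypotheses $W(T^{-1})=\CC$ and $(I+\mu T)$ invertible are satisfied, so the theorem yields $\|(I+\lambda V^n)(I+\mu V^n)^{-1}\|>1$. Since $\lambda\ne\mu$ was arbitrary, this is the assertion of the corollary.

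I do not anticipate any genuine obstacle, since the statement is a direct specialization; the only place demanding a line of justification rather than a citation is the invertibility of $I+\mu V^n$, which follows from the quasinilpotence of $V^n$ (equivalently, from $\sigma(V^n)=\{0\}$). Everything else is a matter of matching notation and quoting Theorems~\ref{T:fullplane} and \ref{T:higherpowers}.
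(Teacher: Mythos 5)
Your proposal is correct and follows exactly the paper's route: the corollary is obtained by specializing Theorem~\ref{T:fullplane} to $T=V^n$, using Theorem~\ref{T:higherpowers} for the hypothesis $W(V^{-n})=\CC$, with the invertibility of $I+\mu V^n$ coming from $\sigma(V^n)=\{0\}$ (quasinilpotence), a fact the paper records in the introduction. Your explicit verification of the invertibility clause is the only detail the paper leaves implicit, and it is handled correctly.
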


We conclude by remarking that Corollary~\ref{C:higherpowers} 
could also be deduced from a result of ter Elst and Zem\'anek \cite[Proposition~2.3]{tZ18}.
The argument is quite different, and uses  properties specific to the Volterra operaor.

%%%%%%%%%%%%%%%%%%%%%%%%%%%%%%%%%%%%%%%%%%%

\section{Proofs of Theorems~\ref{T:abstract}, \ref{T:halfplane} and \ref{T:fullplane}}\label{S:abstract}

\begin{proof}[Proof of Theorem~\ref{T:abstract}]
We have the following chain of equivalences:
\begin{align*}
&\|(I+\lambda T)(I+\mu T)^{-1}\|\le1\\
&\iff \|(I+\lambda T)x\|^2\le \|(I+\mu T)x\|^2 \quad(\forall x\in H)\\
&\iff 2\Re \Bigl((\lambda-\mu)\langle Tx,x\rangle\Bigr)\le (|\mu|^2-|\lambda|^2)\|Tx\|^2 \quad(\forall x\in H)\\
&\iff 2\Re \Bigl((\lambda-\mu)\langle y,T^{-1}y\rangle\Bigr)\le (|\mu|^2-|\lambda|^2)\|y\|^2 \quad(\forall y\in T(H))\\
&\iff 2\Re \Bigl((\lambda-\mu)\overline{w}\Bigr)\le |\mu|^2-|\lambda|^2\quad(\forall w\in W(T^{-1}))\\
&\iff 2h_{W(T^{-1})}(\lambda-\mu)\le|\mu|^2-|\lambda|^2.
\end{align*}
This completes the proof.
\end{proof}

\begin{proof}[Proof of Theorem~\ref{T:halfplane}]
Fix $\lambda,\mu\in\CC$, 
and, for the time being, suppose also that $(I+\mu T)$ is invertible.

As $W(T^{-1})$ is unbounded, we  have $0\in\sigma(T)$, so $1\in \sigma((I+\lambda T)(I+\mu T)^{-1})$,
and consequently $\|(I+\lambda T)(I+\mu T)^{-1}\|\ge1$. 

By assumption, $W(T^{-1})=\{z:\Re z\ge0\}$, so
\[
h_{W(T^{-1})}(z)=
\begin{cases}
0, &\text{if~} z\in(-\infty,0],\\
\infty, &\text{otherwise}.
\end{cases}
\]
Using Theorem~\ref{T:abstract}, it follows that
\[
\|(I+\lambda T)(I+\mu T)^{-1}\|\le 1
\iff \lambda-\mu\in(-\infty,0] \text{~and~}|\mu|\ge|\lambda|\\
\]

It is easy to see that, if $\lambda,\mu$ are distinct, then
\[
\Bigl(\lambda-\mu\in[-\infty,0) \text{~and~}|\mu|\ge|\lambda|\Bigr)
\iff \Bigl(\Re \mu>0 \text{~and~} \lambda\in[-\mu,\mu)\Bigr).
\]

Combining all these remarks, we deduce that, if $\lambda,\mu$ are distinct complex numbers and
$(I+\mu T)$ is invertible, then
\[
\|(I+\lambda T)(I+\mu T)^{-1}\|=1\iff \Re \mu>0 \text{~and~} \lambda\in[-\mu,\mu).
\]

Finally, we note that, although the above equivalence 
was established under the assumption that $(I+\mu T)$ is
invertible, in fact the invertibility of $(I+\mu T)$ is automatic if $\Re\mu>0$.
Indeed, since $W(T^{-1})=\{z:\Re z\ge0\}$,
it follows that $W(T)\subset\{z:\Re z\ge0\}$, whence  
$\sigma(T)\subset\{z:\Re z\ge0\}$
and so $(I+\mu T)$ is invertible. This completes the proof.
\end{proof}

\begin{proof}[Proof of Theorem~\ref{T:fullplane}]
We have $h_{W(T^{-1})}(z)=h_\CC(z)=\infty$ for all $z\in\CC\setminus\{0\}$.
By Theorem~\ref{T:abstract}, it follows that $\|(I+\lambda T)(I+\mu T)^{-1}\|>1$
whenever $(I+\mu T)$ is invertible and $\lambda\ne\mu$.
\end{proof}

%%%%%%%%%%%%%%%%%%%%%%%%%%%%%%%%%%%%%%%%%%%

\section{Proofs of Theorems~\ref{T:inverse} and \ref{T:higherpowers}}\label{S:negpowers}

\begin{proof}[Proof of Theorem~\ref{T:inverse}]
Let $g\in V(L^2[0,1])$, say $g=Vf$. Then
\[
\Re\langle V^{-1}g,g\rangle=\Re\langle f,Vf\rangle=\frac{1}{2}\langle f,(V+V^*)f\rangle=\frac{1}{2}\Bigl| \int_0^1 f\Bigr|^2\ge0.
\]
Thus $W(V^{-1})\subset\{z:\Re z\ge0\}$.

For the reverse inclusion, we first note that a sufficient condition for $g$ to belong to $V(L^2[0,1])$
is that $g\in C^1[0,1]$ with $g(0)=0$. In this case we have $V^{-1}g=g'$.

In particular, if $n\ge1$ and $g_n(x):=e^{\pm2\pi inx}-1$,
then $g_n\in V(L^2[0,1])$, with $\|g_n\|_2^2=2$ and
\[
\langle V^{-1}g_n,g_n\rangle=\langle g_n',g_n\rangle=\langle \pm2\pi in e^{\pm2\pi inx},\,e^{\pm2\pi inx}-1\rangle=\pm2\pi in.
\]
It follows that $\pm i\pi n\in W(V^{-1})$. 

Likewise, if $n\ge1$ and $h_n(x):=x^n$,
then $h_n\in V(L^2[0,1])$ with $\|h_n\|_2^2=1/(2n+1)$ and
\[
\langle V^{-1}h_n,h_n\rangle=\langle h_n',h_n\rangle=\langle nx^{n-1},x^n\rangle=1/2.
\]
It follows that $n+\frac{1}{2}\in W(V^{-1})$.

Finally, since $\{\pm\pi in,\,(n+\frac{1}{2}):n\ge1\}\subset  W(V^{-1})$ 
and $W(V^{-1})$ is convex, we deduce that $\{z:\Re z\ge0\}\subset W(V^{-1})$.
\end{proof}

\begin{proof}[Proof of Theorem~\ref{T:higherpowers}]
Fix $n\ge2$.
A sufficient condition for $g$ to belong to $V^n(L^2[0,1])$
is that $g\in C^n[0,1]$ with $g^{(k)}(0)=0$ for $0\le k\le n-1$.
In this case $V^{-n}g=g^{(n)}$.

Fix $\theta\in(-\pi/2,\pi/2)$,
and, for $r>0$, set $g_r(x):=x^n\exp(re^{i\theta} x)$.
Then $g_r\in V^n(L^2[0,1])$, and by Leibniz's formula we have
\[
g_r^{(n)}(x)=\sum_{k=0}^n\binom{n}{k}\frac{n!}{k!}x^k(re^{i\theta})^k\exp(re^{i\theta}x).
\]
Hence
\begin{align*}
\langle V^{-n}g_r,g_r\rangle
&=\sum_{k=0}^n\binom{n}{k}\frac{n!}{k!}(re^{i\theta})^k\Bigl\langle x^k\exp(re^{i\theta}x),x^n\exp(re^{i\theta}x)\Bigr\rangle\\
&=\sum_{k=0}^n\binom{n}{k}\frac{n!}{k!}(re^{i\theta})^k\int_0^1 x^{n+k}\exp((2r\cos\theta) x)\,dx.
\end{align*}
Now, for each $m\ge0$, we have
\[
\int_0^1 x^{m}\exp((2r\cos\theta)x\,dx)\sim\int_0^1 \exp((2r\cos\theta))x\,dx
\quad(r\to\infty).
\]
It follows that, as $r\to\infty$,
\begin{align*}
\int_0^1 x^{n+k}\exp((2r\cos\theta)x\,dx)&=\int_0^1 x^{2n}\exp((2r\cos\theta))x\,dx\Bigl(1+o(1)\Bigl)\\
&=\|g_r\|_2^2(1+o(1)).
\end{align*}
Therefore
\[
\langle V^{-n}g_r,g_r\rangle=\Bigl(r^ne^{in\theta}+O(r^{n-1})\Bigr)\|g_r\|_2^2(1+o(1))\quad(r\to\infty).
\]
Thus,
for each $\theta\in(-\pi/2,\pi/2)$,
the set  $W(V^{-n})$ contains complex numbers of the form $r^ne^{in\theta}(1+o(1))$
as $r\to\infty$.
As $n\ge2$ and $W(V^{-n})$ is convex, we deduce that $W(V^{-n})=\CC$, as claimed.
\end{proof}

%%%%%%%%%%%%%%%%%%%%%%%%%%%%%%%%%%%%%%%%%%%%%%%%%%%%
%%%%%%%%%%%%%%%%%%%%%%%%%%%%%%%%%%%%%%%%%%%%%%%%%%%%

\begin{remark}
The  simple descriptions 
of the numerical ranges of negative powers of $V$ in Theorems~\ref{T:inverse}
and \ref{T:higherpowers} are in marked contrast with those of the positive powers of~$V$,
which are more complex.
It has long been known that $W(V)$ is the closed region bounded 
by the vertical segment $[-i/2\pi,\,i/2\pi]$ and the curves
\[
t\mapsto \Bigl(\frac{1-\cos t}{t^2}\Bigr) \pm i\Bigl(\frac{t-\sin t}{t^2}\Bigr) \quad(t\in[0,2\pi]).
\]
This was stated without proof in \cite[p.113]{Ha82}. 
Proofs can be found, for example, in \cite{KT18} and \cite{RW23}.
More recently, in \cite{KT20}, Khadkhuu and the second author obtained 
an analogous description of $W(V^2)$.
It is considerably more complicated.
To the best of our knowledge, the precise identification of $W(V^n)$ remains
an open problem for $n\ge3$.

The operator norms of $V^n$
have been intensively studied by many authors
(see e.g.\ \cite[p.1058]{BD09}).
Analytic expressions for $\|V^n\|$ are known for $n=1,2,3$.
For higher values of $n$, exact calculations become too complicated,
and one has to be satisfied with estimates. We suspect that the same may 
turn out to be true 
of $W(V^n)$.
\end{remark}

%%%%%%%%%%%%%%%%%%%%%%%%%%%%%%%%%%%%%%%%%%%%%%%%%%%
%%%%%%%%%%%%%%%%%%%%%%%%%%%%%%%%%%%%%%%%%%%%%%%%%%%
\section*{Acknowledgements}
We are grateful to the anonymous referee for some very helpful comments.

\section*{Declarations}

\subsection*{\textbf{Funding statement}}
Ransford's research was supported by a grant from the Natural Sciences and Engineering Research Council of Canada. Tsedenbayar has no relevant financial or non-financial interests to disclose.

%\subsection*{\textbf{Data availability statement}}
%Data sharing not applicable to this article as no datasets were generated or analyzed during the current study.

\bibliographystyle{amsplain}
\bibliography{biblist.bib}

\end{document}